\documentclass[12pt]{amsart}
\usepackage{amsmath,amsthm,amsfonts,mathrsfs,amssymb,cite}

\vfuzz2pt 
\hfuzz2pt 
\textheight=240truemm \textwidth=155truemm \hoffset=-15truemm
\voffset=-10truemm
\newtheorem{thm}{Theorem}
\newtheorem{theorem}[thm]{Theorem}
\newtheorem{corollary}[thm]{Corollary}

\newtheorem{lemma}[thm]{Lemma}

\theoremstyle{definition}

\theoremstyle{remark}
\newtheorem{remark}[thm]{Remark}



\newcommand\ov{\overline}
\newcommand\dom{\operatorname {dom}}

\newcommand{\bN}{\mathbb N}
\newcommand{\bR}{\mathbb R}
\newcommand{\bZ}{\mathbb Z}

\newcommand{\sD}{\mathscr D}

\newcommand{\si}{\sigma}


\begin{document}
\title[Schr\"odinger operators with singular potentials]
{Self-adjointness of Schr\"odinger operators with singular potentials
}%
\author[R.~O.~Hryniv \and Ya.~V.~Mykytyuk]{Rostyslav O.~Hryniv \and Yaroslav V.~Mykytyuk}%
\address[R.H.]{Institute for Applied Problems of Mechanics and Mathematics,
3b~Naukova st., 79601 Lviv, Ukraine}%
\email{rhryniv@iapmm.lviv.ua}

\address[Ya.M.]{Lviv National University, 1 Universytetska st., 79602 Lviv, Ukraine}
\email{yamykytyuk@yahoo.com}

\thanks{}%
\subjclass[2010]{Primary 34L05; Secondary 34L40, 47A05, 47B25.}%
\keywords{Schr\"odinger operators, self-adjointness, singular potentials}%

\date{March 21, 2012}%
\dedicatory{Dedicated to the memory of A.~G.~Kostyuchenko}%

\begin{abstract}
We study one-dimensional Schr\"odinger operators~$S$ with real-valued distributional potentials $q$ in $W^{-1}_{2,\mathrm{loc}}(\bR)$ and prove an extension of the Povzner--Wienholtz theorem on self-adjointness of bounded below~$S$ thus providing additional information on its domain. The results are further specified for $q\in W^{-1}_{2,\mathrm{unif}}(\bR)$.
\end{abstract}

\maketitle

\section{Introduction and main results}\label{sec:intr}

In the Hilbert space $L_2(\bR)$, we consider a Schr\"odinger operator
\[
  S = - \frac{d^2}{dx^2} + q
\]
with potential $q$ that is a real-valued distribution from the space
$W^{-1}_{2,\mathrm{loc}}(\bR)$. Recall that $W^{-1}_{2,\mathrm{loc}}(\bR)$ is the dual space to the space~$W^1_{2,\mathrm{comp}}(\bR)$ of functions in $W_2^1(\bR)$ with compact support and that every real-valued $q\in W^{-1}_{2,\mathrm{loc}}(\bR)$ can be represented as $\si'$ for a real-valued function~$\si$ from $L_{2,\mathrm{loc}}(\bR)$. The operator $S$ can then be rigorously defined e.g.\ by the so-called regularization method that was used in~\cite{AEZ:1988} in the particular case $q(x)=1/x$ and then developed for generic
distributional potentials in~$W_{2,\mathrm{loc}}^{-1}(\bR)$ by Savchuk and Shkalikov~\cite{SavShk:1999, SavShk:2003}; see also recent extensions to more general differential expressions in~\cite{GorMik:2010,GorMik:2011}. Namely, the regularization method suggests to define $S$ via
\begin{equation}\label{eq:S}
  S f = \ell (f):=  - (f' -\si f)' - \si f'
\end{equation}
on the natural maximal domain
\begin{equation}\label{eq:dom-S}
  \dom S = \{f \in L_2(\mathbb R) \mid f, \
    f'-\sigma f \in AC_{\mathrm{loc}}(\mathbb R),\
      \ell(f) \in L_2(\mathbb R)\};
\end{equation}
here $AC_{\mathrm{loc}}(\bR)$ is the space of functions that are locally absolutely continuous.
It is straightforward to see that $Sf = -f'' + qf$ in the sense of distributions, so that the above definition is independent of the particular choice of the primitive $\si\in L_{2,\mathrm{loc}}(\bR)$.

One can also introduce the minimal operator $S_0$, which is the closure of the restriction~$S_0'$ of $S$ onto the set of functions of compact support, i.e., onto
\[
    \dom S_0' = \{f \in L_{2,\mathrm{comp}}(\bR) \mid f, f'-\si f \in AC_{\mathrm{loc}}(\bR),\ \ell(f) \in L_2(\bR)\}.
\]
The operator $S_0'$ (and hence $S_0$) is symmetric; moreover, in a standard manner~\cite{MM:2008} one proves that $S$ is the adjoint of~$S_0$, so that $S$ is the so-called maximal operator.

An important question preceding any further analysis of the operator~$S$ is whether it is self-adjoint. Recently, this question has attracted attention in the literature in the particular case where the distributional potential $q\in W^{-1}_{2,\mathrm{loc}}(\bR)$ contains the sum of Dirac delta-functions~\cite{AlbKosMal:2011,KosMal:2010,IsmKos:2010} or is periodic~\cite{MM:2008} (complex-valued periodic~$q$ are discussed in~\cite{DjaMit:2010}), or belongs to the space $W^{-1}_{2,\mathrm{unif}}(\bR)$~\cite{HM:2001}. We recall~\cite{HM:2001} that any $q \in W^{-1}_{2,\mathrm{unif}}(\bR)$ can be represented (not uniquely) in the form
$q = \si' + \tau$, where $\si$ and $\tau$ belong to $L_{2,\mathrm{unif}}(\bR)$ and  $L_{1,\mathrm{unif}}(\bR)$, respectively, i.e.,
\begin{align*}
  \|\sigma\|^2_{2,\mathrm{unif}} &:= \sup_{t\in\bR} \int_t^{t+1} |\sigma(s)|^2 ds < \infty,\\
   \|\tau\|_{1,\mathrm{unif}} &:= \sup_{t\in\bR} \int_t^{t+1} |\tau(s)|\,ds < \infty,
\end{align*}
and the derivative is understood in the sense of distributions.
Given such a representation, the operator $S$ is defined as
\begin{equation}\label{eq:S-unif}
    Sf= -(f'-\si f)' - \si f' + \tau f
\end{equation}
on the domain~\eqref{eq:dom-S}; this definition is again independent of the particular choice of $\sigma$ and $\tau$ above.

Theorem~3.5 of our paper~\cite{HM:2001} claims that for real-valued $q \in W^{-1}_{2,\mathrm{unif}}$ the operator~$S$ as defined by~\eqref{eq:S-unif} and \eqref{eq:dom-S} is self-adjoint and coincides with the operator~$T$ constructed by the form-sum method. However, as was pointed out in~\cite{MM:2008} and \cite{Gesz:2011}, the proof given in~\cite{HM:2001} is incomplete: namely, it establishes the inclusion $T\subset S$ but then derives the equality $S=T$ taking for granted that $S$ is symmetric.
However, since $S_0$ is symmetric, symmetry of~$S$ would immediately imply its self-adjointness, and only the claim that $S=T$ in Theorem~3.5 of~\cite{HM:2001} would remain non-trivial.

The fact that $S$ is indeed self-adjoint is rigorously justified in the paper~\cite{MM:2008} for the particular case where $q\in W_{2,\mathrm{unif}}^{-1}(\bR)$ is periodic. The authors prove therein that $S_0$, $S$, $T$, and the Friedrichs extension of $S_0$ all coincide; however, the arguments heavily use periodicity of~$q$ and thus are not applicable for generic real-valued $q\in W^{-1}_{2,\mathrm{unif}}(\bR)$.

Recently, Albeverio, Kostenko and Malamud~\cite{AlbKosMal:2011} extended the Povzner--Wienholtz theorem  stating that boundedness below of the minimal operator implies its self-adjointness (see~\cite{ClaGes:2003} and the references therein) to the class of arbitrary distributional potentials in $W^{-1}_{2,\mathrm{loc}}(\bR)$. The proof of Theorem~I.1 in~\cite{AlbKosMal:2011} is for the half-line and for the particular case where $q = q_0 + \sum_k \alpha_k \delta(\cdot-x_k)$, where $q_0 \in L_{1,\mathrm{loc}}(\bR)$, $\alpha_k$ and $x_k$ are real numbers, and $\delta$ is the Dirac delta-function; however, Remark~III.2 explains that the same proof works in the more general situation of $q\in W^{-1}_{2,\mathrm{loc}}(\bR)$. In particular, for $q \in W^{-1}_{2,\mathrm{unif}}(\bR)$ the minimal operator $S_0$ is shown in~\cite{HM:2001} to be bounded below; therefore, the operator $S_0=S$ is then self-adjoint by the above extension of the Povzner--Wienholtz theorem. This fills out the gap in the proof of Theorem~3.5 of our paper~\cite{HM:2001}.

The aim of this note is to give an alternative proof of the Povzner--Wienholtz theorem for distributional potentials $q \in W^{-1}_{2,\mathrm{loc}}(\bR)$. Our approach has several merits; namely, it gives the representation of a positive operator $S$ in the von Neumann form $A^*A$ for some first order differential operator~$A$ and provides additional information on the domain of~$S$. For regular~$q$, possibility of such a representation is known to follow from disconjugacy of $S$ on the whole line, i.e., from the Jacobi condition in the variational problem for the corresponding quadratic form of~$S$, see~\cite[Ch.~XI.10,11]{Har:1982}. We also mention that the factorization of~$S$ as $A^*A$ is of basic importance for the Darboux transformation method, also called Darboux--Crum, or single commutation method, see~\cite{Cru:1955,Dar:1882,Dei:1978,MatSal:1991}.

Namely, assume that a real-valued distribution $q \in W^{-1}_{2,\mathrm{loc}}(\bR)$ is such that the minimal operator $S_0$ is bounded below. Adding a constant to $q$ as necessary, we can make $S_0$ positive and shall assume this throughout the rest of the note. Then~\cite{KPST:2005} the equation
$y'' = qy$ has a (possibly not unique) solution that is positive over $\bR$, and $r:=y'/y \in L_{2,\mathrm{loc}}(\bR)$ is a global distributional solution to the Riccati equation $r'+r^2=q$. The function~$r$ is called the \emph{Riccati representative} of~$q$. Moreover, the differential expression~$\ell$ of~\eqref{eq:S}  admits then a formal representation
\[
    \ell := -\frac{d^2}{dx^2} + q = - \Bigl(\frac{d}{dx}+r\Bigr)\Bigl(\frac{d}{dx}-r\Bigr).
\]
This representation suggests that $\ell$ is also related to a differential operator~$A^*A$, where
$A$ is the differential operator of first order given by
\begin{equation}\label{eq:A-oper}
    Af = f' - rf
\end{equation}
on the maximal domain
\begin{equation}\label{eq:A-dom}
    \dom A = \{f \in L_2(\bR) \mid f'-rf \in L_2(\bR)\}.
\end{equation}
The derivative $f'$ for $f\in\dom A$ is understood in the sense of distributions; observe, however, that $f' = rf + Af$ is locally integrable so that every $f\in \dom A$ is locally absolutely continuous.

Our extension of the Povzner--Wienholtz theorem reads now as follows.

\begin{theorem}\label{thm:main}
Assume that a real-valued distribution $q \in W^{-1}_{2,\mathrm{loc}}(\bR)$ is such that the minimal operator $S_0$ is positive and denote by $r \in L_{2,\mathrm{loc}}(\bR)$ a Riccati representative of~$q$. Then $S_0$ is self-adjoint; moreover, $S_0=S=A^*A$, and for every $f\in \dom S$ it holds that $f'-rf \in L_2(\bR)$.
\end{theorem}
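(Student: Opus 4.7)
The plan is to establish the three-way equality $S_0 = S = A^*A$ via the chain $S_0 \subset A^*A \subset S$ and the reverse inclusion $S \subset A^*A$, the latter being the crux and simultaneously delivering the assertion that $f'-rf \in L_2(\bR)$ for every $f \in \dom S$.

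First I would record basic properties of $A$: arguing as for ordinary closed differential operators, $A$ is closed on the maximal domain~\eqref{eq:A-dom}, and its adjoint acts as $A^*g = -g' - rg$ on $\{g\in L_2(\bR) : g'+rg \in L_2(\bR)\}$. By von Neumann's theorem, $A^*A$ is then self-adjoint and positive, and a direct distributional computation using the Riccati identity $r'+r^2 = q$ yields $A^*A f = -f''+qf = \ell(f)$ whenever $f \in \dom(A^*A)$. To get $S_0 \subset A^*A$ I would verify that every $f \in \dom S_0'$ lies in $\dom(A^*A)$: compact support together with continuity of $f$ and $f'-\si f$ and with $r,\si \in L_{2,\mathrm{loc}}(\bR)$ places $Af \in L_2(\bR)$, and the Riccati computation $(Af)' + r(Af) = -\ell(f) \in L_2(\bR)$ places $Af$ in $\dom A^*$ with $A^*Af = \ell(f) = S_0'f$. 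Closure then gives $S_0 \subset A^*A$, and symmetry of $A^*A$ yields $A^*A \subset S_0^* = S$.

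The main step is the reverse inclusion $S \subset A^*A$, which reduces to showing $Af \in L_2(\bR)$ for every $f \in \dom S$ (the inclusion $Af \in \dom A^*$ with $A^*Af = Sf$ then being automatic). I would introduce cutoffs $\chi_n \in C_c^\infty(\bR)$ satisfying $\chi_n \equiv 1$ on $[-n,n]$, $\mathrm{supp}\,\chi_n \subset [-n-1,n+1]$, with $\|\chi_n'\|_\infty$ and $\|\chi_n''\|_\infty$ uniformly bounded, and set $f_n := \chi_n f$. Using that $f$ is continuous (hence locally bounded) and $\si \in L_{2,\mathrm{loc}}(\bR)$, one checks $f_n \in \dom S_0'$; in particular $\chi_n'f' \in L_2(\bR)$ even though $f'$ itself need not be. The previous paragraph then places $f_n \in \dom(A^*A)$, so that
\[
    \|A f_n\|^2 = \langle A^*A f_n, f_n\rangle = \langle \ell(f_n), f_n\rangle.
\]
Substituting the distributional product rule $\ell(\chi_n f) = \chi_n\,\ell(f) - 2\chi_n' f' - \chi_n'' f$ and integrating the real part $2\mathrm{Re}\int \chi_n\chi_n' f'\ov f\,dx = \int \chi_n\chi_n' (|f|^2)'\,dx$ by parts, the terms carrying $\chi_n\chi_n''|f|^2$ cancel, leaving
\[
    \|A f_n\|^2 = \|\chi_n' f\|^2 + \mathrm{Re}\langle \chi_n^2 f, S f\rangle \le C\|f\|^2 + \|f\|\,\|Sf\|
\]
uniformly in $n$. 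Since $A f_n = \chi_n' f + \chi_n A f$, this forces $\|\chi_n A f\|_{L_2(\bR)}$ to stay bounded, and monotone convergence delivers $Af \in L_2(\bR)$.

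This completes $S = A^*A$; self-adjointness of $A^*A$ therefore yields self-adjointness of $S$, and the identity $S = S_0^* = S^*$ combined with closedness of $S_0$ forces $S_0 = S_0^{**} = S^* = S$. The hardest part will be the technicalities surrounding the cutoff argument: verifying that $f_n \in \dom S_0'$ as an $L_2$ statement (not merely distributional), establishing the product-rule expression for $\ell(f_n)$ as an $L_2$-equality, and justifying the integrations by parts on $(|f|^2)'$—each resting on the local regularity of $f$ extracted from $\ell(f)\in L_2(\bR)$, without any a priori knowledge that $Af \in L_2(\bR)$.
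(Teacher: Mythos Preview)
Your argument is correct, and it takes a genuinely different route from the paper's in both of its two main steps.

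For the key assertion that $f'-rf\in L_2(\bR)$ whenever $f\in\dom S$, the paper argues by contradiction in Lemma~\ref{lem:dom-S}: assuming $y^{[1]}\notin L_2(\bR^+)$ and integrating $\ell(y)\ov y$ over $[0,x]$, one is led to a Riccati-type differential inequality $I'(T)\ge c\,I(T)^2$ for $I(T)=\int_0^T\!\int_0^x|y^{[1]}|^2$, forcing finite-time blow-up and hence a contradiction. Your smooth cut-off computation, by contrast, produces directly the quantitative a~priori bound
\[
    \|A(\chi_n f)\|^2 = \|\chi_n' f\|^2 + \mathrm{Re}\,\langle \chi_n^2 f, Sf\rangle
    \le C\|f\|^2 + \|f\|\,\|Sf\|,
\]
which, after writing $A(\chi_n f)=\chi_n'f+\chi_n Af$, gives a uniform bound on $\|\chi_n Af\|$ and hence $Af\in L_2(\bR)$ by monotone convergence. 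The paper's method is the classical ODE/disconjugacy argument of Hartman; yours is more functional-analytic and has the bonus of yielding an explicit form-bound $\|Af\|^2 \le C\|f\|^2 + \|f\|\,\|Sf\|$ on $\dom S$.

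For the conclusion, the paper proves separately (Lemma~\ref{lem:Fri}) that $A^*A$ is the Friedrichs extension of $S_0$, then combines $\dom S\subset\dom A=\dom\mathfrak s_F$ with the extremal property of the Friedrichs extension to force $S_0=S_F=S$. You bypass the Friedrichs machinery entirely: from $Af\in L_2(\bR)$ you verify via the Riccati identity that $(Af)'+r(Af)=-Sf\in L_2(\bR)$, i.e.\ $Af\in\dom A^*$, giving the operator inclusion $S\subset A^*A$ directly. Together with $S_0\subset A^*A\subset S_0^*=S$ this closes the chain. The paper's detour through the Friedrichs extension yields the extra structural information that $A^*A$ is that distinguished extension, while your route is shorter and uses only the von Neumann theorem.
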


This theorem can further be specified if $q \in W^{-1}_{2,\mathrm{unif}}(\bR)$. As we mentioned above, the operator $S_0$ is then automatically bounded below and thus self-adjoint; moreover, we can characterize its domain as follows.

\begin{corollary}\label{cor:dom-S}
Assume that a real-valued $q \in W^{-1}_{2,\mathrm{unif}}(\bR)$ is written as $q = \sigma' + \tau$ with some $\sigma \in L_{2,\mathrm{unif}}(\bR)$ and $\tau \in L_{1,\mathrm{unif}}(\bR)$. Then the corresponding maximal Schr\"odinger operator~$S$ is self-adjoint; moreover, $\dom S \subset W_2^1(\bR)$ and  $y'-\sigma y \in L_2(\bR)$ for every $y\in \dom S$.
\end{corollary}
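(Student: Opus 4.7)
The plan is to establish a Dirichlet-type form bound $\|f'\|_2^2\le C\bigl(\langle Sf,f\rangle+\|f\|_2^2\bigr)$ on the dense subspace $\dom S_0'$ and then transport it to $\dom S$ by exploiting the self-adjointness guaranteed by Theorem~\ref{thm:main}. Since $S_0$ is bounded below for $q\in W^{-1}_{2,\mathrm{unif}}(\bR)$ by \cite{HM:2001}, a harmless shift $q\mapsto q+C$ makes it positive (and leaves $\dom S$ intact), so Theorem~\ref{thm:main} applies and yields $S=S_0=(S_0)^*$.

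For $f\in\dom S_0'$ an integration by parts (legitimate by the compact support of $f$) yields
\begin{equation*}
  \langle Sf,f\rangle=\|f'\|_2^2-2\,\mathrm{Re}\int_{\bR}\sigma\bar f f'\,dx+\int_{\bR}\tau|f|^2\,dx.
\end{equation*}
Averaging the identity $|f(x)|^2=|f(y)|^2+2\,\mathrm{Re}\int_y^x\bar f f'\,dt$ over $y\in[a,a+1]$ produces the local Sobolev bound $\|f\|^2_{L_\infty([a,a+1])}\le\|f\|^2_{L_2([a,a+1])}+2\|f\|_{L_2([a,a+1])}\|f'\|_{L_2([a,a+1])}$; Young's inequality and summation over unit intervals indexed by $\bZ$ then give, for every $\delta\in(0,1]$,
\begin{equation*}
  \|\sigma f\|_2^2\le C\|\sigma\|_{2,\mathrm{unif}}^2\bigl(\delta^{-1}\|f\|_2^2+\delta\|f'\|_2^2\bigr),\qquad \int_{\bR}|\tau||f|^2\,dx\le C\|\tau\|_{1,\mathrm{unif}}\bigl(\delta^{-1}\|f\|_2^2+\delta\|f'\|_2^2\bigr).
\end{equation*}
Combining this with $2|\int\sigma\bar f f'|\le\epsilon\|f'\|_2^2+\epsilon^{-1}\|\sigma f\|_2^2$ and choosing first $\epsilon$ and then $\delta$ sufficiently small, one obtains
\begin{equation*}
  \tfrac12\|f'\|_2^2\le\langle Sf,f\rangle+M\|f\|_2^2\qquad(f\in\dom S_0'),
\end{equation*}
with $M$ depending only on $\|\sigma\|_{2,\mathrm{unif}}$ and $\|\tau\|_{1,\mathrm{unif}}$.

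For $y\in\dom S=\dom S_0$, pick $f_n\in\dom S_0'$ with $f_n\to y$ and $Sf_n\to Sy$ in $L_2(\bR)$; then $\langle Sf_n,f_n\rangle\to\langle Sy,y\rangle$ and $\|f_n\|_2\to\|y\|_2$, so the form bound forces the sequence $(f_n')$ to be bounded in $L_2(\bR)$. Weak $L_2$-compactness, together with $f_n\to y$ in the sense of distributions, identifies the weak limit of a subsequence as $y'$, so $y'\in L_2(\bR)$ and $\dom S\subset W_2^1(\bR)$. A final application of the Sobolev-type estimate to $f=y$ shows $\sigma y\in L_2(\bR)$, whence $y'-\sigma y\in L_2(\bR)$. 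The principal obstacle is the form inequality itself: what does the work is the $\delta$-parameterised local Sobolev bound, which makes the $\|f'\|_2^2$ contributions coming from $\sigma$ and $\tau$ small enough to be absorbed into the left-hand side, after which the density/weak-compactness step is routine.
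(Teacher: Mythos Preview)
Your proof is correct and takes a genuinely different route from the paper's. The paper proceeds via the Riccati representative: it first establishes (Lemma~\ref{lem:Riccati}) that any distributional solution $r\in L_{2,\mathrm{loc}}(\bR)$ of $r'+r^2=q$ automatically lies in $L_{2,\mathrm{unif}}(\bR)$ whenever $q\in W^{-1}_{2,\mathrm{unif}}(\bR)$, and then combines this with Lemma~\ref{lem:dom-S} to obtain $\dom S\subset\dom A$; a short local estimate using $r\in L_{2,\mathrm{unif}}(\bR)$ then gives $\dom A\subset W_2^1(\bR)$. Notably, this argument works directly on the maximal domain and does not use the equality $S=S_0$. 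Your approach, by contrast, bypasses the Riccati machinery entirely: you prove a KLMN-type relative form bound on the precore $\dom S_0'$ and then invoke $S=S_0$ (from Theorem~\ref{thm:main}) to transport the Dirichlet inequality to all of $\dom S$ by graph-norm density and weak compactness. Your route is more elementary and avoids the nontrivial Lemma~\ref{lem:Riccati}; the price is that self-adjointness is now an \emph{input} for the domain inclusion rather than independent of it, and you do not recover the side information (of independent interest for the factorisation $S=A^*A$) that the Riccati representative itself belongs to $L_{2,\mathrm{unif}}(\bR)$. A minor remark: applying your form bound to $f_n-f_m$ actually yields that $(f_n')$ is Cauchy in $L_2(\bR)$, so you get strong convergence and can dispense with the weak-compactness step.
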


We observe that Proposition~12 of \cite{MM:2008} shows that if $q\in W^{-1}_{2,\mathrm{loc}}(\bR)$ is periodic, then the three statements:
\begin{itemize}
 \item[(a)] $S$ is self-adjoint;
 \item[(b)] $\dom S \subset W_2^1(\bR)$;
 \item[(c)] for every $y\in\dom S$, $y'-\sigma y \in L_2(\bR)\cap AC_{\mathrm{loc}}(\bR)$
\end{itemize}
are equivalent.

\section{Proofs}

We start with the following simple observation.

\begin{lemma}
The operator~$A$ defined in~\eqref{eq:A-oper}--\eqref{eq:A-dom} is closed.
\end{lemma}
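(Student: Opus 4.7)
The plan is to verify closedness directly from the definition: take a sequence $\{f_n\}\subset \dom A$ with $f_n\to f$ in $L_2(\bR)$ and $Af_n\to g$ in $L_2(\bR)$, and show $f\in \dom A$ with $Af=g$.

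The key observation is that multiplication by $r$ is a continuous operator from $L_2(\bR)$ to $L_{1,\mathrm{loc}}(\bR)$: for every compact $K\subset\bR$, the Cauchy--Schwarz inequality gives
\[
    \|r f_n - rf\|_{L_1(K)} \le \|r\|_{L_2(K)}\,\|f_n-f\|_{L_2(\bR)}\to 0,
\]
so that $rf_n\to rf$ in $L_{1,\mathrm{loc}}(\bR)$. Combining this with $Af_n\to g$ in $L_2(\bR)\hookrightarrow L_{1,\mathrm{loc}}(\bR)$, the identity $f_n' = r f_n + A f_n$ (valid since $f_n\in\dom A$) yields $f_n'\to rf+g$ in $L_{1,\mathrm{loc}}(\bR)$, and a fortiori in the space $\mathscr D'(\bR)$ of distributions.

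On the other hand, $f_n\to f$ in $L_2(\bR)$ entails $f_n\to f$ in $\mathscr D'(\bR)$, and since distributional differentiation is sequentially continuous, $f_n'\to f'$ in $\mathscr D'(\bR)$. By uniqueness of distributional limits, $f'=rf+g$ as distributions. Since the right-hand side lies in $L_{1,\mathrm{loc}}(\bR)$, the function $f$ is in $AC_{\mathrm{loc}}(\bR)$, and the identity $f'-rf=g\in L_2(\bR)$ shows $f\in\dom A$ and $Af=g$, as required.

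I do not expect any genuine obstacle here: the proof rests solely on the continuity of multiplication by $r\in L_{2,\mathrm{loc}}(\bR)$ on $L_2(\bR)$ with values in $L_{1,\mathrm{loc}}(\bR)$, and on the continuity of distributional differentiation. The only point one must be careful about is interpreting $f_n'$ and $f'$ consistently (as distributions throughout), which is why the argument is phrased via convergence in $\mathscr D'(\bR)$ rather than pointwise or in $L_2$.
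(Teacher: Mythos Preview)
Your proof is correct and follows essentially the same route as the paper's: both pass to distributional convergence, use that $rf_n\to rf$ in $L_{1,\mathrm{loc}}$ (via Cauchy--Schwarz, which you make explicit) together with $Af_n\to g$ to get $f_n'\to rf+g$ in $\mathscr D'(\bR)$, and then compare with $f_n'\to f'$ in $\mathscr D'(\bR)$ from continuity of differentiation. The only minor addition is your explicit remark that $f\in AC_{\mathrm{loc}}(\bR)$, which the paper notes separately just before stating the lemma.
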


\begin{proof}
Let $y_n\in\dom A$ be such that $y_n \to y$ and $g_n:=Ay_n \to g$ in $L_2(\bR)$ as $n\to\infty$.
Since convergence in $L_{1,\mathrm{loc}}(\bR)$ yields convergence in the space of distributions $\sD'(\bR)$, we conclude that $y_n \to y$, $ry_n \to ry$, and $g_n \to g$ in $\sD'(\bR)$. Therefore,
$y_n' = ry_n + g_n \to ry + g$ in $\sD'(\bR)$ as $n\to\infty$; on the other hand, $y_n' \to y'$ in $\sD'(\bR)$ since differentiation is a continuous operation in $\sD'(\bR)$. It follows that $y' = ry+g$, whence $y\in \dom A$ and $Ay=g$ as required.
\end{proof}

The von Neumann theorem~\cite[Thm.~V.3.24]{Kat:1976} yields now the following result.

\begin{corollary}\label{cor:SF}
The operator $S_F:= A^* A$ is self-adjoint on the domain
\[
    \dom S_F := \{ f\in L_2(\bR) \mid Af \in \dom A^*\}.
\]
\end{corollary}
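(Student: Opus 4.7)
The plan is to read this off directly from the von Neumann theorem cited just before the statement. That theorem says: if $A$ is a densely defined closed operator between Hilbert spaces, then $A^*A$ is self-adjoint on its natural domain $\{f\in\dom A \mid Af\in\dom A^*\}$, and this coincides with the domain written in the corollary (which tacitly requires $Af$ to make sense, i.e.\ $f\in\dom A$). The preceding lemma already supplies closedness of $A$, so the only hypothesis left to verify is that $\dom A$ is dense in $L_2(\bR)$.

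For density, I would exhibit a familiar dense subspace of $L_2(\bR)$ contained in $\dom A$, the natural candidate being $C_c^\infty(\bR)$. If $\varphi\in C_c^\infty(\bR)$ then $\varphi'\in L_2(\bR)$, and $r\varphi\in L_2(\bR)$ since $r\in L_{2,\mathrm{loc}}(\bR)$ and $\varphi$ has compact support. Hence $\varphi'-r\varphi\in L_2(\bR)$, so $\varphi\in\dom A$ by the definition~\eqref{eq:A-dom}. Because $C_c^\infty(\bR)$ is dense in $L_2(\bR)$, so is $\dom A$, and the von Neumann machine applies.

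I do not anticipate any genuine obstacle: the statement is packaged as a corollary precisely because it falls out of the named theorem once $A$ is known to be closed and densely defined. The only small bookkeeping point is to reconcile the two forms of the domain — $\{f\in\dom A : Af\in\dom A^*\}$ from the abstract theorem versus $\{f\in L_2(\bR) : Af\in\dom A^*\}$ in the corollary — but this is settled by the observation that writing $Af\in\dom A^*$ presupposes $f\in\dom A$, so the two sets are identical.
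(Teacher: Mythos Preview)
Your proposal is correct and follows exactly the route the paper takes: the corollary is stated immediately after the closedness lemma and is justified solely by citing the von Neumann theorem~\cite[Thm.~V.3.24]{Kat:1976}. You are in fact slightly more careful than the paper, since you explicitly verify that $\dom A$ is dense (via $C_c^\infty(\bR)\subset\dom A$), a hypothesis the paper leaves implicit.
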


Clearly, $S_F$ is a self-adjoint extension of the minimal operator $S_0$. It turns out that $S_F$ is the Friedrichs extension of $S_0$, see Chapter~VI of Kato's classic book~\cite{Kat:1976} for all relevant definitions.

\begin{lemma}\label{lem:Fri}
The operator~$S_F$ is the Friedrichs extension of $S_0$.
\end{lemma}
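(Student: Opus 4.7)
The plan is to identify $S_F = A^*A$ with the Friedrichs extension of $S_0$ by matching their closed quadratic forms. Von Neumann's theorem yields that the form associated with $A^*A$ is $\mathfrak{t}_F(f,f) = \|Af\|^2$ on $\dom \mathfrak{t}_F = \dom A$, while the Friedrichs extension is by definition the self-adjoint operator whose form is $\overline{\mathfrak{t}_0}$, the closure of $\mathfrak{t}_0(f,f) = (S_0 f, f)$ on $\dom\mathfrak{t}_0 = \dom S_0'$. It therefore suffices to prove $\mathfrak{t}_F = \overline{\mathfrak{t}_0}$. One inclusion is straightforward: for $f\in \dom S_0'$ the factorization $\ell(f) = -(Af)' - r\,Af$ together with integration by parts (permitted because $f$ has compact support and $f$ and $f'-rf$ are in $AC_{\mathrm{loc}}$) gives $(S_0 f, f) = \|Af\|^2$, and since the preceding lemma makes $\mathfrak{t}_F$ closed one obtains $\overline{\mathfrak{t}_0} \subset \mathfrak{t}_F$. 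The substantive task is the reverse inclusion, which reduces to the density of $\dom S_0'$ in $\dom A$ with respect to the $A$-graph norm.

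For this density I would use the substitution $f = yu$, where $y > 0$ is the chosen solution of $y'' = qy$ with $r = y'/y$. Since $r \in L_{2,\mathrm{loc}}(\bR)$, the function $\log y$ is in $AC_{\mathrm{loc}}(\bR)$, whence $y$ is continuous, locally bounded above and away from zero, and in $AC_{\mathrm{loc}}(\bR)$. The map $u \mapsto yu$ is then a bijection of the weighted Sobolev space $H^1_y := \{u : u, u'\in L_2(y^2\,dx)\}$ onto $\dom A$, with $A(yu) = yu'$, and it is an isometry for the respective natural norms. Moreover, for every $u\in C_c^\infty(\bR)$ the function $f=yu$ lies in $\dom S_0'$: indeed $f$ and $f'-rf = yu'$ are in $AC_{\mathrm{loc}}$, and a direct calculation gives $\ell(f) = -yu'' - 2ryu'$, which is compactly supported and in $L_2$. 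Thus density of $\dom S_0'$ in $\dom A$ reduces to density of $C_c^\infty(\bR)$ in $H^1_y$, which I would establish by a standard smooth cutoff followed by mollification; on any compact interval $y^2$ is bounded above and away from zero, so $H^1_y$ coincides locally with the ordinary $H^1$, and the classical approximation arguments go through.

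The main obstacle I anticipate is the density of $C_c^\infty(\bR)$ in $H^1_y$ rather than the algebraic manipulations: one has to verify that a suitable cutoff drives the weighted gradient errors to zero using only $u\in H^1_y$, and that mollification of a compactly supported element of $H^1_y$ converges in the weighted norm. Both rely essentially on the local equivalence of $H^1_y$ and $H^1$ afforded by positivity and continuity of $y$, so once $y \in AC_{\mathrm{loc}}$ and the local two-sided bounds are in place the remaining work is routine.
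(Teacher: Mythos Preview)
Your proposal is correct and takes a genuinely different route from the paper. Both arguments reduce to showing that $\dom S_0'$ is a core for~$A$, but the paper does not approximate an arbitrary $f\in\dom A$ directly. Instead it invokes the second half of von Neumann's theorem, which says that $\dom A^*A$ is itself a core for~$A$; it then approximates any $f\in\dom A^*A$ by the plain cutoffs $f_n=\chi_n f$ and uses the extra regularity available there (namely $Af\in\dom A^*$, so $Af=f'-rf\in AC_{\mathrm{loc}}$, whence $f'_n-\sigma f_n\in AC_{\mathrm{loc}}$ because $r-\sigma\in AC_{\mathrm{loc}}$) to verify $f_n\in\dom S_0'$. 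Your ground-state substitution $f=yu$ bypasses this intermediate step and reduces the core property to the density of $C_c^\infty(\bR)$ in the weighted space $H^1_y$, which follows from cutoff plus mollification exactly as you outline; this is arguably more transparent and does not require the core clause of von Neumann's theorem. One small point to tidy: the definition of $\dom S_0'$ is stated in terms of~$\sigma$, not~$r$, so after checking $f'-rf=yu'\in AC_{\mathrm{loc}}$ you should also note (as the paper does) that $(r-\sigma)'=-r^2\in L_{1,\mathrm{loc}}$, hence $r-\sigma\in AC_{\mathrm{loc}}$, and therefore $f'-\sigma f\in AC_{\mathrm{loc}}$ as required.
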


\begin{proof}
We recall that the Friedrichs extension of $S_0$ is the self-adjoint operator associated with the closure $\mathfrak{s}_0$ of the quadratic form of $S_0$ (defined initially on $\dom S_0$) via the first representation theorem~\cite[Thm.~VI.2.1]{Kat:1976}. The quadratic form $\mathfrak{s}_F$ of $S_F$ is an extension of~$\mathfrak{s}_0$, and to prove that $\mathfrak{s}_0=\mathfrak{s}_F$ it suffices to show that $\dom S_0$ is a core for~$\mathfrak{s}_F$.

It is straightforward to see that $\dom\mathfrak{s}_F$ coincides with $\dom A$ and that $\mathfrak{s}_F$-convergence is equivalent to the $A$-convergence. Therefore it suffices to show that $\dom S_0$ is a core for~$A$. By the von Neumann theorem~\cite[Thm.~V.3.24]{Kat:1976} $\dom A^*A$ is a core for $A$, and it suffices to show that $\dom S_0$ is dense in $\dom A^*A$ in the graph topology of~$A$.

To this end let $f\in \dom A^*A$ be arbitrary. Take $\chi \in C_0^\infty$ such that $0 \le \chi \le 1$ and $\chi\equiv1$ on $(-1,1)$, and set $\chi_n:=\chi(\cdot/n)$ and $f_n:=\chi_nf$. Then $f_n\to f$ and $Af_n=\chi_n(Af) + f\chi_n' \to Af$ in $L_2(\bR)$ as $n\to\infty$, i.e., $f_n$ converge to $f$ in the graph topology of~$A$. Since $Af\in \dom A^*$, we see that $Af_n = f_n'-rf_n$ is absolutely continuous. Recalling that $r'+r^2 = \si'$, we conclude that $r-\si$ is locally absolutely continuous, whence $f_n'-\si f_n$ is absolutely continuous as well. Thus $f_n$ belong to the domain of~$S_0'$, which is henceforth dense in $\dom A^*A$ in the graph topology of~$A$, and the proof is complete.
\end{proof}

Now we study the maximal operator~$S$. The first observation is as follows.

\begin{lemma}\label{lem:dom-S}
For every $y\in \dom S$, the quasi-derivative $y^{[1]}:=y'-ry$ belongs to $L_2(\bR)$.
\end{lemma}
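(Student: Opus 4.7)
The plan is to exploit the formal factorization $\ell = -(d/dx+r)(d/dx-r)$ by showing that $z := y^{[1]} = y'-ry$ satisfies the first-order identity $-z'-rz = Sy \in L_2(\bR)$ almost everywhere, and then to extract an $L_2$-bound for $z$ by a cutoff-and-absorption argument. I would first verify that $z$ is locally absolutely continuous, so that the ensuing calculus is legitimate. Writing $z = (y'-\sigma y) + (\sigma - r)y$, the first summand lies in $AC_{\mathrm{loc}}(\bR)$ by the definition of $\dom S$, while $\sigma-r$ lies in $AC_{\mathrm{loc}}(\bR)$ because its distributional derivative equals $r^2 \in L_{1,\mathrm{loc}}(\bR)$, as follows from $r'+r^2 = q = \sigma'$ for the Riccati representative. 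A direct computation using $r' = q - r^2$ then yields $-z'-rz = -y''+qy = Sy$ almost everywhere.

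With this pointwise identity in hand, the next step is an integration-by-parts identity using the cutoffs $\chi_n := \chi(\cdot/n)$ introduced in the proof of Lemma~\ref{lem:Fri}. Since $\chi_n^2 y\bar z$ is absolutely continuous with compact support, its derivative integrates to zero over $\bR$. Expanding $(\chi_n^2 y\bar z)'$ via the product rule and substituting $y'=z+ry$ together with $\bar z' = -r\bar z - \overline{Sy}$, the two terms $\pm\chi_n^2 ry\bar z$ cancel, which is the key algebraic simplification, and one is left with
\[
    \int_{\bR}\chi_n^2|z|^2\,dx = \mathrm{Re}\int_{\bR}\chi_n^2 y\,\overline{Sy}\,dx - 2\,\mathrm{Re}\int_{\bR}\chi_n\chi_n' y\bar z\,dx.
\]

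The final step is to bound both terms on the right uniformly in $n$. The first is at most $\|y\|_{L_2}\|Sy\|_{L_2}$ by Cauchy--Schwarz. For the second I would apply Cauchy--Schwarz together with the elementary inequality $2ab \le \tfrac{1}{2}a^2 + 2b^2$ to produce a $\tfrac{1}{2}\|\chi_n z\|_{L_2}^2$ summand that can be absorbed into the left-hand side, leaving
\[
    \|\chi_n z\|_{L_2}^2 \le 2\|y\|_{L_2}\|Sy\|_{L_2} + 4\|\chi_n' y\|_{L_2}^2.
\]
Since $\|\chi_n'\|_\infty \le \|\chi'\|_\infty/n \to 0$, the right-hand side is bounded uniformly in $n$, and Fatou's lemma then yields $z \in L_2(\bR)$. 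The main obstacle I anticipate is the first step: confirming that $z$ is locally absolutely continuous and that the factorization $-z'-rz = Sy$ really holds pointwise almost everywhere rather than merely in $\sD'(\bR)$, so that the subsequent product-rule manipulation of $\chi_n^2 y\bar z$ is rigorous. Once this is in hand, the absorption argument is routine.
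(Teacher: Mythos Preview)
Your argument is correct and complete: the verification that $z=y'-ry$ is locally absolutely continuous via the decomposition $z=(y'-\sigma y)+(\sigma-r)y$ is exactly right (note that $(\sigma-r)'=r^2\in L_{1,\mathrm{loc}}$ from the Riccati relation, and both factors in $(\sigma-r)y$ are locally absolutely continuous), the pointwise identity $-z'-rz=Sy$ then follows by the computation you outline, and the cutoff--absorption step is routine once you observe that $\|\chi_n z\|_{L_2}<\infty$ for each fixed~$n$ because $z\in L_{\infty,\mathrm{loc}}$.

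Your route is genuinely different from the paper's. The paper argues by contradiction: assuming $y^{[1]}\notin L_2(\bR^+)$, it integrates $\ell(y)\bar y=g\bar y$ by parts on $[0,x]$, averages in~$x$, and extracts a differential inequality $I'(T)\ge (4\|y\|^2)^{-1}I(T)^2$ for $I(T):=\int_0^T\int_0^x|y^{[1]}|^2\,dt\,dx$, which forces finite-time blow-up and gives a contradiction. This is the Hartman-style ODE approach (cf.\ Lemma~XI.7.1 in \cite{Har:1982}). Your argument is instead the PDE-flavoured energy method: a direct estimate with smooth cutoffs that yields the quantitative bound $\|y^{[1]}\|_{L_2}^2\le 2\|y\|_{L_2}\|Sy\|_{L_2}$ as a byproduct, which is a pleasant bonus not visible from the contradiction proof. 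The paper's route avoids isolating the regularity of $z$ as a separate preliminary step, since the integration by parts it uses is the same one built into the definition of~$\ell$; your route trades that for a short extra paragraph but is arguably more transparent and more readily generalisable.
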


\begin{proof}
Set $g:= Sy$ and assume that $y^{[1]}=y'-ry$ is not in $L_2(\bR^+)$. Integrating $\ell (y)\ov{y}=g\ov{y}$ by parts from $0$ to $x$, we find that
\[
    \int_0^x g(t)\overline{y}(t)\,dt = \int_0^x |y^{[1]}(t)|^2\,dt - y^{[1]}(x)\ov{y}(x) + y^{[1]}(0)\ov{y}(0).
\]
It follows that
\[
    \frac1T \int_0^T \int_0^x |y^{[1]}(t)|^2\,dt\,dx - \frac1T\int_0^T  y^{[1]}(x)\ov{y}(x)\,dx
        = \frac1T \int_0^T \int_0^x g(t)\overline{y}(t)\,dt\, dt - y^{[1]}(0)\ov{y}(0)
\]
remains bounded as $T\to\infty$; since $\int_0^x |y^{[1]}(t)|^2\,dt$ grows to $+\infty$ as $x\to\infty$ by assumption, we conclude that
\[
    \frac1T\Bigl|\int_0^T y^{[1]}(x)\ov{y}(x)\,dx\Bigr| \to \infty
\]
as $T\to\infty$ and, moreover, that
\begin{equation}\label{eq:int-ineq}
    2 \Bigl|\int_0^T  y^{[1]}(x)\ov{y}(x)\,dx\Bigr| \ge \int_0^T \int_0^x |y^{[1]}(t)|^2\,dt\,dx
\end{equation}
for all $T$ large enough. In view of the Cauchy--Bunyakovsky--Schwarz inequality
\[
     \Bigl|\int_0^T  y^{[1]}(x)\ov{y}(x)\,dx\Bigr|
        \le \|y\| \Bigl(\,\int_0^T  |y^{[1]}(x)|^2\,dx\Bigr)^{1/2},
\]
\eqref{eq:int-ineq} results in the inequality
\[
    \int_0^T  |y^{[1]}(x)|^2\,dx
        \ge \frac1{4\|y\|^2}\Bigl(\int_0^T\int_0^x  |y^{[1]}(t)|^2\,dt\,dx\Bigr)^{2}.
\]
Set $I(T):= \int_0^T\int_0^x  |y^{[1]}(t)|^2\,dt\,dx$; then the above inequality can be written as
\[
    I'(T) \ge \frac1{4\|y\|^2}I^2(T),
\]
and, upon integration, yields
\begin{equation}\label{eq:int-ineq2}
    \frac1{I(T_0)} - \frac1{I(T)} \ge \frac{T-T_0}{4\|y\|^2}
\end{equation}
for every positive $T$ and $T_0$ such that $T>T_0$ and $I(T_0)>0$.
However, the assumption that $y^{[1]}\not\in L_2(\bR^+)$ implies that $I(T)\to\infty$ as $T\to\infty$, which is in contradiction with~\eqref{eq:int-ineq2}. Therefore $y^{[1]} \in L_2(\bR^+)$; the fact that $y^{[1]} \in L_2(\bR^-)$ is proved analogously.
\end{proof}

\begin{remark}
Similar arguments were used in~\cite[Lemma~XI.7.1]{Har:1982} and \cite[Lemma~4.1]{KPST:2005} in the study of the Riccati equation.
\end{remark}

\begin{proof}[Proof of Theorem~\ref{thm:main}]
By Lemma~\ref{lem:dom-S}, $\dom S \subset \dom A$. Further, $\dom A = \dom \mathfrak{s}_F$, where $\mathfrak{s}_F$ is the quadratic form of $S_F$, the Friedrichs extension of $S_0$. By the extremal property of the Friedrichs extension~\cite[Thm.~VI.2.11]{Kat:1976} we conclude that every self-adjoint restriction of $S$, i.e., every self-adjoint extension of~$S_0$, coincides with $S_F$. This implies that the minimal operator $S_0$ is itself self-adjoint and that $S_0=S_F =S$ as claimed.
\end{proof}

It was proved in~\cite{HM:2001} that if $q\in W^{-1}_{2,\mathrm{unif}}(\bR)$, then the operator $S_0$ is bounded below. Assuming that $S_0$ is already positive, we have as before $q = r' + r^2$ for some $r\in L_{2,\mathrm{loc}}(\bR)$. It turns out that the function $r$ in this representation has some special properties.

\begin{lemma}\label{lem:Riccati}
Assume that real-valued $q \in W^{-1}_{2,\mathrm{unif}}(\bR)$ and $r \in L_{2,\mathrm{loc}}(\bR)$ satisfy the equation $r'+r^2 = q$ in the sense of distributions. Then $r\in L_{2,\mathrm{unif}}(\bR)$.
\end{lemma}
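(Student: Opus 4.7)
The natural strategy is to test the Riccati equation $r'+r^2=q$ against the square of a translated bump function and turn this into a self-bounding quadratic inequality for $\int \phi^2 r^2$. The uniformity of the decomposition of $q$ will then pass to a uniform bound for $r^2$ over unit intervals.

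First I would fix, once and for all, a single cutoff $\phi_0\in C_c^\infty(\bR)$ with $0\le\phi_0\le1$, $\phi_0\equiv1$ on $[0,1]$, and $\operatorname{supp}\phi_0\subset[-1,2]$, and for each $t\in\bR$ set $\phi_t(\cdot):=\phi_0(\cdot-t)$. Using the hypothesis, write $q=\sigma'+\tau$ with $\sigma\in L_{2,\mathrm{unif}}(\bR)$ and $\tau\in L_{1,\mathrm{unif}}(\bR)$; observe that because $r^2\in L_{1,\mathrm{loc}}(\bR)$, the equation $(r-\sigma)'=\tau-r^2$ holds in $L_{1,\mathrm{loc}}(\bR)$, so that $r-\sigma$ is locally absolutely continuous and all pairings below are justified by the distributional definition of $r'$ and $\sigma'$.

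Next I would pair the identity $r'+r^2=\sigma'+\tau$ with the $C_c^\infty$ function $\phi_t^2$. Distributional integration by parts gives
\[
    \int \phi_t^2\,r^2\,dx
    \;=\; 2\int \phi_t\phi_t'\,(r-\sigma)\,dx \;+\; \int \phi_t^2\,\tau\,dx.
\]
Denoting $I(t):=\int\phi_t^2 r^2\,dx$ and applying the Cauchy--Bunyakovsky--Schwarz inequality,
\[
    I(t)\;\le\;2\|\phi_t'\|_2\bigl(\sqrt{I(t)}+\|\phi_t\sigma\|_2\bigr)
       \;+\;\int \phi_t^2\,|\tau|\,dx.
\]
By construction, $\|\phi_t'\|_2=\|\phi_0'\|_2$ is a fixed constant, while $\operatorname{supp}\phi_t\subset[t-1,t+2]$ yields
\[
    \|\phi_t\sigma\|_2^2\le 3\|\sigma\|_{2,\mathrm{unif}}^2,\qquad
    \int \phi_t^2|\tau|\,dx\le 3\|\tau\|_{1,\mathrm{unif}},
\]
so the right-hand side has the form $a\sqrt{I(t)}+b$ with $a,b$ bounded independently of $t$. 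Solving this quadratic-type inequality provides a constant $C$ with $I(t)\le C$ for all $t$; since $\phi_t\equiv1$ on $[t,t+1]$, we conclude $\int_t^{t+1}r^2\,dx\le I(t)\le C$, i.e.\ $r\in L_{2,\mathrm{unif}}(\bR)$.

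The only delicate point is really just bookkeeping: one must verify that distributional testing of $r'$ and $\sigma'$ against $\phi_t^2$ really equals the stated pointwise integrals. This is immediate from $r,\sigma\in L_{2,\mathrm{loc}}(\bR)\subset L_{1,\mathrm{loc}}(\bR)$ and $(\phi_t^2)'\in C_c^\infty(\bR)$. I do not expect a serious obstacle; the content of the lemma lies entirely in the self-bounding mechanism, which survives because the $r^2$ term in the Riccati equation has a favorable sign that makes $I(t)$ appear on the left while only $\sqrt{I(t)}$ appears on the right.
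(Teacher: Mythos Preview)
Your argument is correct, and it is genuinely different from---and cleaner than---the paper's own proof.

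The key distinction is the choice of test function. You pair the Riccati identity with $\phi_t^2$; integration by parts then produces $2\phi_t\phi_t'\,r$, and the surviving factor $\phi_t$ lets Cauchy--Schwarz return exactly $\sqrt{I(t)}$. The inequality closes on itself: $I(t)\le a\sqrt{I(t)}+b$ with $a,b$ uniform in $t$, and you are done in one stroke. The paper instead pairs with a single (piecewise linear) $\phi_n$, so integration by parts throws the $r$-term onto the \emph{neighbouring} intervals and yields only the three-term recursion $a_n\le a_{n-1}^{1/2}+a_{n+1}^{1/2}+C$. This does not close by itself; the authors must first rule out the possibility that $a_n\to\infty$ at both ends by a separate averaging argument (integrating the tested identity in the translation parameter $\xi$ and invoking Fubini to prove $\liminf a_n\le C/2$), and then finish with a maximum argument on finite blocks. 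Your squaring trick collapses all of this into a single quadratic inequality; the paper's route, while longer, has the mild advantage of tracking explicit constants tied to $\|\sigma\|_{2,\mathrm{unif}}$ and $\|\tau\|_{1,\mathrm{unif}}$ without reference to $\|\phi_0'\|_2$.
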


\begin{proof}
We set
\[
    a_n:=\int_{n}^{n+1} r^2(t)\,dt, \qquad n\in\bZ,
\]
and prove that $\sup_{n\in\bZ}a_n$ is finite.

Denote by $\phi$ the function in $W^1_2(\bR)$ with support equal to $[-1,2]$ and defined via
\[
    \phi(x) = \begin{cases}
        1+x & x \in [-1,0),\\
        1   & x \in [0,1],\\
        2-x & x \in (1,2].
    \end{cases}
\]
We also set $\phi_\xi:=\phi(\,\cdot\,-\xi)$ and notice that $\|\phi_\xi\|_{L_\infty}=\|\phi'_\xi\|_{L_\infty}=1$.
Denoting by $\langle \,\cdot\,,\,\cdot\,\rangle$ the pairing  between $W^{-1}_{2,\mathrm{loc}}(\bR)$ and $W^1_{2,\mathrm{comp}}(\bR)$, we find that
\begin{equation}\label{eq:Riccati}
    -\langle r,\phi'_\xi\rangle + \langle r^2,\phi_\xi\rangle = \langle q,\phi_\xi\rangle.
\end{equation}
As $q = \sigma'+\tau$ with some $\sigma\in L_{2,\mathrm{unif}}(\bR)$ and $\tau\in L_{1,\mathrm{unif}}(\bR)$, the right-hand side of this equality admits the uniform estimate
\begin{equation}\label{eq:q-unif}
    |\langle q,\phi_\xi\rangle| \le |\langle \sigma,\phi'_\xi\rangle|
                        + |\langle \tau,\phi_\xi\rangle|
                        \le 3 \|\sigma\|_{2,\mathrm{unif}} + 3 \|\tau\|_{1,\mathrm{unif}} =: C;
\end{equation}
we assume that $C>0$ as otherwise $q\equiv r\equiv 0$ and there is nothing to prove.
The inequalities
\[
    \langle r^2 ,\phi_n\rangle  \ge a_n, \qquad
    |\langle r ,\phi'_n\rangle| \le a^{1/2}_{n-1} + a^{1/2}_{n+1}
\]
combined with~\eqref{eq:Riccati} and \eqref{eq:q-unif} lead to the relation
\begin{equation}\label{eq:an-bound}
    a_n \le a^{1/2}_{n-1} + a^{1/2}_{n+1} +C.
\end{equation}

We shall prove below that
\begin{equation}\label{eq:liminf}
    \liminf_{n\to-\infty}a_n \le C/2, \qquad \liminf_{n\to+\infty}a_n \le C/2,
\end{equation}
so that there exist sequences $(n^-_k)_{k\in\bN}$ and $(n^+_k)_{k\in\bN}$ tending respectively to $-\infty$ and $+\infty$ such that $a_{n^\pm_k}<C$ for all $k\in\bN$.
Given this, the proof is concluded as follows. We have either $a_n\le C$ for all $n\in\bZ$, or otherwise $a_m > C$ for some $m\in\bZ$. In the latter case, for every $k$ so large that $m \in (n^-_k,n^+_k)$ the maximum
\[
    C_k:=\max\{a_j \mid j = n^-_k, \dots, n^+_k\}
\]
is assumed for some index $m_k$ strictly between $n^-_k$ and $n^+_k$. Inequality~\eqref{eq:an-bound} for $n=m_k$ then yields
\[
    C_k \le 2C^{1/2}_k + C,
\]
whence $C_k \le 2 C+4$. Therefore in both cases $\sup_{n\in\bZ}a_n$ is finite thus implying that $r\in L_{2,\mathrm{unif}}(\bR)$ as claimed.

It remains to establish~\eqref{eq:liminf}. To this end we take $a<b$ so that $b-a>3$ and integrate~\eqref{eq:Riccati} in $\xi$ over $(a,b)$. As
\[
    \int_a^b \phi_\xi'(t)\,d\xi = \int_a^b \phi'(t-\xi)\,d\xi
        = \phi_a(t)-\phi_b(t),
\]
the Fubini theorem yields
\begin{equation}\label{eq:Fubini}
    -\int_a^b\langle r,\phi'_\xi\rangle\,d\xi = \langle r,\phi_b\rangle
                        -\langle r,\phi_a\rangle.
\end{equation}
Similarly,
\[
    \int_a^b\langle r^2,\phi_\xi\rangle\,d\xi = \langle r^2,\psi\rangle
\]
with
\[
    \psi(t):=\int_a^b \phi_\xi(t) \,d\xi.
\]
Observing that $\operatorname{supp} \psi = [a-1,b+2]$, that $\psi(t) = 2$ for $t\in[a+2,b-1]$ and that
$\psi(t) \ge \tfrac12\phi^2_a(t)$ for $t\in [a-1,a+2]$ and
$\psi(t) \ge \tfrac12\phi^2_b(t)$ for $t\in [b-1,b+2]$,
we get
\[
    \langle r^2,\psi\rangle \ge 2 \int_{a+2}^{b-1} r^2(t)\,dt +
        \tfrac12\langle r^2,\phi^2_a\rangle + \tfrac12\langle r^2,\phi^2_b\rangle.
\]
On the other hand, relations \eqref{eq:Riccati}, \eqref{eq:q-unif}, and \eqref{eq:Fubini}  imply the inequality
\[
    \langle r^2,\psi\rangle \le \Bigl|\int_a^b\langle q,\phi_\xi\rangle\,d\xi\Bigr|
            + \Bigl|\int_a^b \langle r,\phi'_\xi\rangle\,d\xi \Bigr|
                \le C(b-a) + |\langle r,\phi_a\rangle| + |\langle r,\phi_b\rangle|.
\]
Noticing that $|\langle r,\phi_\xi\rangle|\le 2 \langle r^2,\phi^2_\xi\rangle^{1/2}$ by the Cauchy--Bunyakovsky--Schwarz inequality and that $2x-\tfrac12x^2\le 2$ for $x\in\bR$, we conclude that
\begin{align*}
    2 \int_{a+2}^{b-1} r^2(t)\,dt
    &\le C(b-a)
        + 2 \langle r^2,\phi^2_a\rangle^{1/2}
        - \tfrac12\langle r^2,\phi^2_a\rangle
        + 2 \langle r^2,\phi^2_b\rangle^{1/2}
        - \tfrac12\langle r^2,\phi^2_b\rangle\\
    & \le C(b-a)+4.
\end{align*}
This estimate yields~\eqref{eq:liminf} in a straightforward manner, and the proof is complete.
\end{proof}

\begin{proof}[Proof of Corollary~\ref{cor:dom-S}]
We may again assume that the operator $S$ is positive and denote by $r\in L_{2,\mathrm{unif}}(\bR)$ the corresponding solution of the Riccati equation $r'+r^2 =q$ and by $A$ the differential operator of~\eqref{eq:A-oper}--\eqref{eq:A-dom}. By Lemma~\ref{lem:dom-S}, the domain of~$S$ is contained in~$\dom A$, so that it suffices to show that $\dom A \subset W_2^1(\bR)$.

Take an arbitrary $y \in \dom A$;  thus $y$ and $y'-ry=g$ are in $L_2(\bR)$. Set $\Delta_n:=[n,n+1)$, $g_n:=\bigl(\int_{\Delta_n}|g(t)|^2\,dt\bigr)^{1/2}$, and choose $\xi_n \in \Delta_n$ such that
\[
    |y(\xi_n)| \le \Bigl(\int_{\Delta_n}|y(t)|^2\,dt\Bigr)^{1/2} =: y_n.
\]
For every $x\in\Delta_n$, we integrate the equality $y'=ry+g$ from $\xi_n$ to $x$ to get the estimates
\[
    |y(x)| \le |y(\xi_n)| + \int_{\Delta_n} |r(t)y(t)|\,dt + \int_{\Delta_n}|g(t)|\,dt
            \le y_n +  y_n\,\|r\|_{2,\mathrm{unif}} + g_n =: b_n
\]
and
\[
    \int_{\Delta_n} |r(t)y(t)|^2\,dt \le b_n^2\,\|r\|^2_{2,\mathrm{unif}}.
\]
Since the sequence $(b_n)$ belongs to $\ell_2(\bZ)$, it follows that $ry \in L_2(\bR)$; thus $y' = ry + g \in L_2(\bR)$, and $y \in W_2^1(\bR)$.

Further, it was proved in~\cite{HM:2001} that $y\in W_2^1(\bR)$ and $\sigma \in L_{2,\mathrm{unif}}(\bR)$
imply that $\sigma y \in L_2(\bR)$, whence the quasi-derivative $y'-\sigma y$ belongs to $L_2(\bR)$ as well. The proof is complete.
\end{proof}

\medskip

\emph{Acknowledgements.} The authors thank Professors F.~Gesztesy, A.~Kostenko, M.~Malamud, and V.~Mikhailets for fruitful discussions and comments. R.H. acknowledges support from the Isaac Newton Institute for Mathematical Sciences at the University of Cambridge for participation in the programme \emph{``Inverse Problems''}, during which part of this work was done.


\begin{thebibliography}{99}
\bibitem{AlbKosMal:2011} S.~Albeverio, A.~S.~Kostenko, and M.~M.~Malamud,
    Spectral theory of semibounded Sturm--Liouville operators with local interactions on a discrete set,
    \emph{J. Math. Phys.} \textbf{51} (2010), no.~102102 (24 pp).

\bibitem{AEZ:1988} F.~V.~Atkinson, W.~N.~Everitt, and A.~Zettl,
    Regularization of a Sturm--Liouville problem with an interior singularity using quasi-derivatives.
    \emph{Diff. Integ. Eqns} \textbf{1} (1988), 213--221.

\bibitem{ClaGes:2003}
    S.~Clark and F.~Gesztesy,
    On Povzner--Wienholtz-type self-adjointness results for matrix-valued Sturm--Liouville operators,
    \emph{Proc. Royal Soc. Edinb.} \textbf{133A} (2003), 747--758.

\bibitem{Cru:1955}
    M.~M.~Crum,
    Associated Sturm-Liouville systems,
    \emph{Quart. J. Math. Oxford (2)}, \textbf{6} (1955), 121--127.

\bibitem{Dar:1882}
    G.~Darboux,
    Sur une proposition relative aux \'equations lin\'eaires,
    \emph{C. R. Hebd. Rend. Acad. Sci.} \textbf{94} (1882), 1456--1459.

\bibitem{Dei:1978}
    P.~Deift,
    Applications of a commutation formula,
    \emph{Duke Math. J.} \textbf{45} (1978), no.~2, 267--310.

\bibitem{DjaMit:2010}
    P.~Djakov and B.~Mityagin,
    Fourier method for one-dimensional Schr\"odinger operators with singular periodic potentials,
    in \emph{Topics in operator theory, Vol.~2,
    Systems and mathematical physics}, 195--236,
    \emph{Oper. Theory Adv. Appl.}, \textbf{203}, Birkh\"auser Verlag, Basel, 2010.


\bibitem{Gesz:2011} F.~Gesztesy, \emph{private communication} (2011).

\bibitem{GorMik:2010} A.~S.~Goriunov and  V.~A.~Mikhailets,
    Regularization of singular Sturm--Liouville equations,
    \emph{Methods Funct. Anal. Topology} \textbf{16} (2010), no.~2, 120--130.

\bibitem{GorMik:2011} A.~S.~Goriunov and  V.~A.~Mikhailets,
    Regularization of binomial differential equations with singular coefficients,
    \emph{Ukrainian Math. J.}, \textbf{63} (2011), no.~9, 1190--1205.

\bibitem{Har:1982} P.~Hartman,
    \emph{Ordinary Differential Equations}, 2nd ed.,
    Birkh\"auser Boston, Massachusetts, 1982.

\bibitem{HM:2001} R.~O.~Hryniv and Ya.~V.~Mykytyuk,
    1D Schr\"odinger operators with periodic singular potentials,
    \emph{Meth. Funct. Anal. Topol.} \textbf{7} (2001), no.~4, 31--42.

\bibitem{IsmKos:2010}
    R.~S.~Ismagilov and A.~G.~Kostyuchenko,
    Spectral asymptotics of the Sturm--Liouville operator with point interaction,
    \emph{Funkts. Anal. Prilozhen.} \textbf{44} (2010), no.~4, 14--20;
    \emph{Engl. transl.:} \emph{Funct. Anal. Appl.} \textbf{44} (2010), no.~4, 253--258.

\bibitem{KPST:2005} T.~Kappeler, P.~Perry, M.~Shubin, and P.~Topalov,
    The Miura map on the line,
    \emph{Int. Math. Res. Not.}, \textbf{50} (2005), 3091--3133.

\bibitem{Kat:1976} T.~Kato,
    \emph{Perturbation Theory for Linear Operators}, 2nd edition,
    {Grundlehren der Mathematischen Wissenschaften}, Band \textbf{132},
    Springer-Verlag, Berlin--New York, 1976.

\bibitem{KosMal:2010} A.~S.~Kostenko and M.~M.~Malamud,
    1-D Schr\"odinger operators with local point interactions on a discrete set,
    \emph{J. Differ. Equat.} \textbf{249} (2010), 253--304.

\bibitem{MatSal:1991}
    V.~B.~Matveev and M.~A.~Salle,
    \emph{Darboux Transformations and Solitons},
    Springer-Verlag, Berlin--Heidelberg, 1991.

\bibitem{MM:2008} V.~Mikhailets and V.~Molyboga,
    One-dimensional Schr\"odinger operators with singular periodic potentials,
    \emph{Meth. Funct. Anal. Topol.} \textbf{14} (2008), no.~2, 184--200.

\bibitem{Pov} A.~Ya.~Povzner,
    The expansion of arbitrary functions in eigenfunctions of the operator
    $-\Delta u + cu$,
    \emph{Mat. Sb.} \textbf{32} (1953), 109--156.
    (Engl. transl.: \emph{Am. Math. Soc. Transl. 2} \textbf{60} (1967), 1--49.)

\bibitem{SavShk:1999} A.~M.~Savchuk and A.~A.~Shkalikov,
    Sturm--Liouville operators with singular potentials,
    \emph{Matem. Zametki} \textbf{66} (1999), no.~6, 897--912,
    \emph{Engl. transl.:}
    \emph{Math. Notes} \textbf{66} (1999), no.~5--6, 741--753.

\bibitem{SavShk:2003} A.~M.~Savchuk and A.~A.~Shkalikov,
    Sturm--Liouville operators with distributional potentials,
    \emph{Trudy Mosk. Matem Ob-va},
    \textbf{64} (2003), 159--212,
    \emph{Engl. transl.:}  \emph{Trans. Moscow Math. Soc.} \textbf{2003},
    143--192.

\bibitem{Wie:1958} E.~Wienholtz,
    Halbbeschr\"ankte partielle Differentialoperatoren zweiter Ordnung vom elliptischen Typus,
    \emph{Math. Ann.} \textbf{135} (1958), 50--80.

\end{thebibliography}
\end{document}